\theoremstyle{plain} 
\newtheorem{theorem}{Theorem}[section]
\newtheorem{lemma}[theorem]{Lemma}
\newtheorem{proposition}[theorem]{Proposition}
\newtheorem{corollary}[theorem]{Corollary}
\theoremstyle{definition} 
\newtheorem{example}[theorem]{Example}
\newtheorem{remark}[theorem]{Remark}
\newcommand{\End}[1]{\operatorname{\rm End}_{#1}}
\newcommand{\Hom}[1]{\operatorname{{\rm Hom}}_{#1}}
\newcommand{\MOD}{\mbox{{\rm mod \!}}}
\begin{document}
\title{$\tau$-tilting finite gentle algebras are representation-finite}

\author{Pierre-Guy Plamondon}
\address{Laboratoire de Math\'ematiques d'Orsay, Universit\'e Paris-Sud, CNRS, Universit\'e Paris-Saclay, 91405 Orsay, France}
\email{pierre-guy.plamondon@math.u-psud.fr}

\keywords{}
\thanks{The author is supported by the French ANR grant SC3A (ANR-15-CE40-0004-01) and by a PEPS ``Jeune chercheuse, jeune chercheur'' grant.}  

\date{\today}


\begin{abstract}
 We show that a gentle algebra over a field is~$\tau$-tilting finite if and only if it is representation-finite.
 The proof relies on the ``brick-$\tau$-tilting correspondence'' of Demonet--Iyama--Jasso and on a combinatorial analysis.
 
\end{abstract}

\maketitle

\tableofcontents

\section{Introduction and main result}

The theory of~$\tau$-tilting was introduced in \cite{AdachiIyamaReiten} as a far-reaching generalization of classical tilting theory for finite-dimensional associative algebras.
One of the main classes of objects in the theory is that of \emph{$\tau$-rigid modules}: a module~$M$ over an algebra~$\Lambda$ is~$\tau$-rigid if the space of morphisms~$\Hom{\Lambda}(M, \tau M)$ vanishes,
where~$\tau$ is the Auslander--Reiten translation.
In \cite{DemonetIyamaJasso}, conditions were established for an algebra~$\Lambda$ to admit only finitely many isomorphism classes of indecomposable~$\tau$-rigid modules.
Such an algebra is called~\emph{$\tau$-tilting finite}.

An obvious sufficient condition for an algebra to be~$\tau$-tilting finite is for it to be representation-finite.  
This is not a necessary condition: for instance, if~$k$ is a field, then the algebra~$k\langle x,y \rangle/(x^2, y^2, xy, yx)$ is representation-infinite 
(since it is a string algebra in the sense of \cite{ButlerRingel} and admits at least one band, namely~$xy^{-1}$), but it is~$\tau$-tilting finite (since it is local).
Our aim in this note is to prove that, for a certain class of algebras called \emph{gentle algebras}, representation-finiteness and~$\tau$-tilting finiteness are equivalent conditions.

Gentle algebras form a subclass of the class of string algebras.
They enjoy a simple definition in terms of generators and relations: 
a gentle algebra is a finite-dimensional algebra isomorphic to a quotient of a path algebra of a finite quiver~$Q$ by an ideal~$I$ generated by paths of length two, 
satisfying the condition that for every vertex~$v$ of~$Q$, the minimal full subquiver with relation of~$\bar Q = (Q,I)$ containing~$v$ and all arrows attached to~$v$ is a full subquiver with relations of the one depicted below,
where dotted line indicate relations.
\begin{center}
\begin{tikzcd}
 \bullet & & \bullet\arrow[dl, ""{name=U}] \\
 & v \arrow[ul,""{name=V}] \arrow[dr, ""{name=X}] & \\
 \bullet\arrow[ur, ""{name=W}] && \bullet
 \arrow[dash, dotted, bend left=50, from=V, to=U]
 \arrow[dash, dotted, bend right=50, from=W, to=X]
\end{tikzcd}
\end{center}

Despite their simple definition, gentle algebras are encountered in many different contexts.
They were introduced in \cite{AssemSkowronski} in the study of iterated tilted algebras of type~$\widetilde{A}_m$, 
but have recently appeared in connection with dimer models \cite{Bocklandt, Broomhead}, enveloping algebras of some Lie algebras \cite{HuerfanoKhovanov}, 
cluster algebras and categories arising from triangulated surfaces \cite{Labardini,ABCP},~$m$-Calabi--Yau tilted algebras \cite{Garcia, Garcia2},
non-kissing complexes of grids and associated objects \cite{McConville, GarverMcConville,  PaluPilaudPlamondon, BrustleDouvilleMousavandThomasYildirim},
non-commutative nodal curves \cite{BurbanDrozd},
and partially wrapped Fukaya categories \cite{HaidenKatzarkovKontsevich, LekiliPolishchuk}.
Surface models have been introduced to study the category representations of a gentle algebra and associated categories \cite{BaurSimoes, OpperPlamondonSchroll, PaluPilaudPlamondon2}.

In this note, we prove the following theorem on gentle algebras.  It is proved for Schurian gentle algebras in \cite{DemonetIyamaPalu} (see also \cite{DemonetHDR}).

\begin{theorem}\label{theo::main}
 A gentle algebra is~$\tau$-tilting finite if and only if it is representation-finite.
\end{theorem}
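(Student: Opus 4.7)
The plan is to handle the two directions separately. The ``only if'' direction is immediate, since a representation-finite algebra has only finitely many indecomposables in total, hence only finitely many~$\tau$-rigid ones. I focus on the contrapositive of the other direction: assuming a gentle algebra~$\Lambda$ is representation-infinite, I must show it is not~$\tau$-tilting finite. Because gentle algebras are string algebras, the Butler--Ringel classification of indecomposables into string and band modules shows that~$\Lambda$ is representation-infinite precisely when it admits at least one band; I fix such a band~$b$, which I may take to be primitive (not a proper power of a shorter cyclic word).

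My next step is to reduce the problem to producing infinitely many bricks, using the brick-$\tau$-tilting correspondence of Demonet--Iyama--Jasso: this states that~$\Lambda$ is~$\tau$-tilting finite if and only if there are only finitely many isomorphism classes of bricks, i.e.\ indecomposable modules~$M$ with~$\End{\Lambda}(M) = k$. The goal thus becomes the construction of infinitely many pairwise non-isomorphic bricks over~$\Lambda$ out of the single datum of the band~$b$. The natural candidates are string modules~$M(w_n)$ for a family of strings~$w_n$ built from~$b$: for instance, finite substrings of the periodic bi-infinite word~$\cdots b b b \cdots$ of strictly increasing length, possibly extended by an asymmetric ``tail'' (a hook or cohook) designed to break the symmetries introduced by the periodicity.

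The combinatorial heart of the argument is the verification that~$M(w_n)$ is a brick for infinitely many~$n$. By the description of morphisms between string modules due to Butler--Ringel (and refined by Krause, Crawley-Boevey, and others), endomorphisms of~$M(w_n)$ are spanned by \emph{graph maps}, which correspond to factorizations of~$w_n$ exhibiting a common middle substring shared between the source and target copies of~$w_n$. A non-scalar endomorphism would arise from a non-identity self-graph-map. The main obstacle I expect is precisely to rule these out: the periodicity of~$w_n$ inherited from~$b$ gives obvious candidate self-similarities (shifts by a period) that could yield non-identity graph maps, and one must show that, for a carefully chosen family of~$w_n$, these either fail the admissibility conditions defining a graph map (hook/cohook matching at the endpoints), or are sent to scalar multiples of the identity. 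Handling the two flavors of this obstruction---the ``internal'' self-similarity coming from shifts along the periodic part and the ``boundary'' effects where~$w_n$ is cut---is the combinatorial analysis that the abstract advertises; once it is carried out, infinitely many brick string modules are produced and the brick-$\tau$-tilting correspondence concludes the proof.
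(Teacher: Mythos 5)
Your framework (easy direction, contrapositive, existence of a band by Butler--Ringel, and the brick-$\tau$-tilting correspondence of Demonet--Iyama--Jasso) matches the paper's, but the proposal stops exactly where the theorem's content begins: you defer the verification that your candidate strings give bricks, and the plan as stated --- take \emph{any} primitive band $b$, form long substrings of $\cdots bbb\cdots$ with an asymmetric tail, and ``rule out'' the periodic self-overlaps --- is not expected to go through without a preliminary replacement of $b$ by a band of a very particular shape. The danger is not only the shift-by-a-period graph maps you mention: a band in a gentle algebra can pass through the same vertex or arrow several times, so already a \emph{single} copy of $b$ can contain a substring lying both on top and at the bottom, and then no power of $b$, tailed or not, is a brick. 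Note also that your outline never invokes gentleness, yet the statement is false for general string algebras: $k\langle x,y\rangle/(x^2,y^2,xy,yx)$ has the band $xy^{-1}$ but its only brick is the simple module, so any argument of the form ``a band yields infinitely many bricks'' must use the gentle condition in an essential way, and you do not say where.

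The paper's resolution of precisely this difficulty is to take a band $b$ of \emph{minimal length} and run a case analysis on its repeated vertices, producing a band that either visits no vertex twice (so that killing everything outside it leaves a hereditary algebra of type $\widetilde{A}_m$) or has the form $b'\omega b''\omega^{-1}$ with $b'$, $b''$ non-extendable cycles and $\omega$ a simple connecting string (Lemmas~\ref{lemm::reductionOnBands} and~\ref{lemm::noCommonVertices}); the reduction to these quotient algebras is legitimate because bricks lift along the fully faithful functors of Corollaries~\ref{coro::firstReduction} and~\ref{coro::secondReduction}. Only for the carefully normalized band $b=(b')^{\varepsilon_1}\omega(b'')^{\varepsilon_1}\omega^{-1}$ of Example~\ref{exam::other} --- with the exponents $\varepsilon_1$ chosen so that the middle copy of $\omega$ is neither on top nor at the bottom --- does the paper carry out the top/bottom substring analysis showing that all powers $b^n$ are bricks (Proposition~\ref{prop::algebraIsTauInfinite}). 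To complete your argument you would need to supply both the normalization of the band and the explicit endomorphism computation; as it stands, the proposal is a correct strategy statement with the decisive combinatorial lemma missing.
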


The proof of the theorem uses the ``brick-$\tau$-tilting correspondence'' of \cite{DemonetIyamaJasso} (recalled in Section~\ref{sect::correspondence}),
and applies a reduction of any gentle algebra to two classes of examples (studied in Section~\ref{sect::twoExamples}).

\section*{Acknowledgements}
I am grateful to Laurent Demonet and Yann Palu for informing me of their work in preparation \cite{DemonetIyamaPalu}, 
and to Vincent Pilaud for several discussions on~$\tau$-tilting finite gentle algebras. 
I also thank Kaveh Mousavand for showing me families of examples of representation-infinite string algebras which are~$\tau$-tilting finite, 
for letting me know of his ongoing work on special biserial algebra regarding questions similar to those treated in this note,
and for pointing out a mistake in the proof of Proposition~\ref{prop::algebraIsTauInfinite} in an earlier version of this note.

\section*{Conventions and a note on terminology}
All algebras in this paper are finite-dimensional over a base field~$k$, which is arbitrary. 
We compose arrows in quivers from left to right: if~$1\xrightarrow{\alpha} 2 \xrightarrow{\beta} 3$ is a quiver, then~$\alpha\beta$ is a path, while~$\beta\alpha$ is not.
For any arrow~$\alpha$ of a quiver, we denote its source by~$s(\alpha)$ and its target by~$t(\alpha)$; we extend this notation naturally to the formal inverse~$\alpha^{-1}$ of~$\alpha$.

In order to keep this note short, the notions of strings, bands and string modules from \cite{ButlerRingel} will be used freely and without further introduction.

\section{The ``brick-$\tau$-tilting correspondence'' and two reduction results}\label{sect::correspondence}

We will be using two results, Corollaries~\ref{coro::firstReduction} and~\ref{coro::secondReduction},
allowing us to reduce the number of vertices of our bound quivers. 
They will both follow from the following consequence of the \emph{brick-$\tau$-tilting correspondence}.
Recall that a \emph{brick} is a module whose endomorphism algebra is a division algebra.

\begin{theorem}[Theorem 1.4 of \cite{DemonetIyamaJasso}]\label{theo::tau-tilting-bricks}
 A finite-dimensional algebra~$\Lambda$ is~$\tau$-tilting finite if and only if there are only finitely many isomorphism classes of~$\Lambda$-modules which are bricks.
\end{theorem}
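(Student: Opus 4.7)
The plan is to prove both directions separately, using the Adachi--Iyama--Reiten bijection between support~$\tau$-tilting modules and functorially finite torsion classes in~$\mod\Lambda$.

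For the easier direction (finitely many bricks~$\Rightarrow$~$\tau$-tilting finite): the first step is to establish that every indecomposable~$\tau$-rigid~$\Lambda$-module~$M$ is a brick. Since~$\End{\Lambda}(M)$ is local for~$M$ indecomposable, any non-invertible endomorphism is nilpotent; an application of the Auslander--Reiten formula then forces such nilpotent endomorphisms to vanish when~$\Hom{\Lambda}(M, \tau M) = 0$, so~$\End{\Lambda}(M)$ is a division algebra. Consequently every indecomposable summand of a~$\tau$-tilting module is a brick, and finiteness of bricks implies finiteness of indecomposable~$\tau$-rigid modules, hence~$\tau$-tilting finiteness.

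For the harder direction ($\tau$-tilting finite~$\Rightarrow$~finitely many bricks): assume~$\Lambda$ is~$\tau$-tilting finite. By Adachi--Iyama--Reiten, there are only finitely many functorially finite torsion classes of~$\mod\Lambda$. The first technical step is to show that \emph{every} torsion class is then functorially finite, making the lattice~$\mathrm{tors}\,\Lambda$ itself finite; this can be argued by contradiction, since a non-functorially finite torsion class would, via lattice operations on torsion classes, produce an infinite family of distinct functorially finite ones and contradict the hypothesis. The second step is to set up a bijection between bricks and cover relations in~$\mathrm{tors}\,\Lambda$: to each cover relation~$\cT' \lessdot \cT$ one associates the unique brick~$B$ lying in the heart~$\cT \cap {}^{\perp}\cT'$; conversely, each brick~$B$ determines a unique cover relation whose larger element is the smallest torsion class~$\mathrm{T}(B)$ containing~$B$. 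The finiteness of~$\mathrm{tors}\,\Lambda$ then yields the desired finiteness of bricks.

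The main obstacle is the brick-labelling step: showing that every cover relation carries a \emph{unique} brick label, and that the resulting correspondence is a bijection with the class of bricks, requires a delicate analysis of the combinatorics of torsion classes and of the heart categories attached to cover relations. A secondary technical difficulty is the upgrade from finitely many functorially finite torsion classes to finiteness of the whole lattice~$\mathrm{tors}\,\Lambda$, which requires controlling how non-functorially-finite torsion classes could in principle arise as limits within the lattice.
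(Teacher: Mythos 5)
This statement is Theorem~1.4 of Demonet--Iyama--Jasso, which the paper imports as a black box and does not prove; there is therefore no internal proof to compare your attempt against, and any assessment has to be made against the published proofs of the result. Measured that way, your outline follows a recognizable and essentially correct route: the easy direction (indecomposable $\tau$-rigid modules are bricks, hence finitely many bricks forces $\tau$-tilting finiteness) is the standard argument, and the hard direction via finiteness of the lattice of torsion classes and brick labels on cover relations is the approach of Demonet--Iyama--Reading--Reiten--Thomas rather than the original argument of Demonet--Iyama--Jasso, which instead analyses the map sending an indecomposable $\tau$-rigid module $X$ to the brick $X/\!\rad_{\End{\Lambda}(X)}X$ and the minimal torsion class $\mathsf{T}(B)$ generated by a brick $B$. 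Either route works, but both hinge on the same nontrivial inputs.

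The difficulty is that your proposal defers exactly those inputs. The two points you label as ``obstacles'' --- (i) that $\tau$-tilting finiteness upgrades ``finitely many functorially finite torsion classes'' to ``the whole lattice of torsion classes is finite,'' and (ii) that every cover relation carries a unique brick label and every brick arises as such a label --- are the substantive content of the theorem, and no argument is given for either; the phrase ``via lattice operations on torsion classes'' does not identify the actual mechanism (which goes through mutation of support $\tau$-tilting modules and the absence of infinite paths in the Hasse quiver). There is also a genuine overstatement: the brick labelling is \emph{not} a bijection between bricks and cover relations --- already for a semisimple algebra with two simples there are four cover relations and two bricks --- so a brick does not determine a unique cover relation. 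What is true, and what the finiteness argument needs, is only that the labelling is a well-defined surjection from Hasse arrows onto bricks, so that finitely many cover relations force finitely many bricks. As written, the proposal is a correct roadmap with the two load-bearing steps missing and one of them misstated.
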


\begin{corollary}\label{coro::functors}
 Let~$\Lambda$ and~$\Lambda'$ be two finite-dimensional algebra, 
 and assume that there exists a fully faithful functor~$F:\MOD \Lambda \to \MOD \Lambda'$.
 If~$\Lambda'$ is~$\tau$-tilting finite, then so is~$\Lambda$.
\end{corollary}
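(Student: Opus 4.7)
The plan is to apply Theorem~\ref{theo::tau-tilting-bricks} and deduce the statement purely from properties of fully faithful functors. Concretely, by the theorem, it suffices to show that the number of isomorphism classes of bricks in~$\MOD\Lambda$ is bounded above by the number of isomorphism classes of bricks in~$\MOD\Lambda'$; then~$\tau$-tilting finiteness of~$\Lambda'$ forces the former to be finite, and the theorem, applied in the reverse direction, gives~$\tau$-tilting finiteness of~$\Lambda$.

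To produce this bound, I would show that~$F$ induces an injection from isomorphism classes of bricks in~$\MOD\Lambda$ to isomorphism classes of bricks in~$\MOD\Lambda'$. There are two things to verify. First, if~$M$ is a brick over~$\Lambda$, then~$F(M)$ is a brick over~$\Lambda'$: full faithfulness of~$F$ gives a ring isomorphism~$\End{\Lambda}(M)\cong \End{\Lambda'}(F(M))$, so the endomorphism algebra on the right is a division algebra. Second, if~$F(M)\cong F(N)$ as~$\Lambda'$-modules via an isomorphism~$\varphi$, then fullness provides~$f\colon M\to N$ and~$g\colon N\to M$ with~$F(f)=\varphi$ and~$F(g)=\varphi^{-1}$, and faithfulness applied to the identities~$F(fg)=F(\mathrm{id}_N)$ and~$F(gf)=F(\mathrm{id}_M)$ promotes these to actual inverses, so~$M\cong N$. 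These two observations together give the desired injection on bricks.

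The argument is essentially formal, so there is no real obstacle; the only point requiring a little care is to use \emph{full} faithfulness (not mere faithfulness) in the second step, in order to lift an isomorphism in the target back to a morphism in the source before invoking faithfulness to recognize it as an isomorphism.
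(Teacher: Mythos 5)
Your proposal is correct and follows the same route as the paper: reduce to counting bricks via Theorem~\ref{theo::tau-tilting-bricks}, then use full faithfulness to show that~$F$ sends bricks to bricks (via~$\End{\Lambda}(M)\cong\End{\Lambda'}(FM)$) and is injective on isomorphism classes. The paper states these two facts more tersely but the argument is identical.
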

\begin{proof}
 If~$B$ is a brick over~$\Lambda$, then~$FB$ is a brick over~$\Lambda'$, since~$\End{\Lambda}(B)$ is isomorphic to~$\End{\Lambda'}(FB)$.
 Moreover, two bricks~$B$ and~$B'$ over~$\Lambda$ are isomorphic if and only if~$FB$ and~$FB'$ are isomorphic.
 Therefore, if~$\Lambda$ admits infinitely many bricks, then so does~$\Lambda'$.
 The result then follows from Theorem~\ref{theo::tau-tilting-bricks}.
\end{proof}

\begin{corollary}[First reduction, Theorem 5.12(d) of \cite{DIRRT}]\label{coro::firstReduction}
 If~$\Lambda$ is~$\tau$-tilting finite and~$I$ is an ideal in~$\Lambda$, then~$\Lambda/I$ is~$\tau$-tilting finite.
\end{corollary}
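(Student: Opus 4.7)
The plan is to deduce this directly from Corollary~\ref{coro::functors}. Taking $\Lambda' = \Lambda$ in that corollary and playing the role of ``$\Lambda$'' with $\Lambda/I$, it suffices to exhibit a fully faithful functor
\[
F \colon \MOD(\Lambda/I) \longrightarrow \MOD \Lambda.
\]

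The natural candidate is the restriction of scalars along the canonical surjection $\pi \colon \Lambda \twoheadrightarrow \Lambda/I$: every $\Lambda/I$-module $M$ becomes a $\Lambda$-module via $\pi$ (equivalently, a $\Lambda$-module on which $I$ acts as zero), and every $\Lambda/I$-linear map between two such modules is tautologically $\Lambda$-linear. Conversely, a $\Lambda$-linear map between modules annihilated by $I$ is automatically $\Lambda/I$-linear, since $I$ already acts as zero on both source and target. This gives a bijection $\Hom{\Lambda/I}(M,N) = \Hom{\Lambda}(FM, FN)$, so $F$ is fully faithful.

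Applying Corollary~\ref{coro::functors} to this functor yields the result: if $\Lambda$ is $\tau$-tilting finite, then so is $\Lambda/I$. There is essentially no obstacle here, as the whole point is that the hard work has already been done in the brick-$\tau$-tilting correspondence and in Corollary~\ref{coro::functors}; once one observes that inflation along a quotient map is fully faithful, the statement is immediate.
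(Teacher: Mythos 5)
Your proof is correct and is essentially identical to the paper's: the paper applies Corollary~\ref{coro::functors} to the functor $-\otimes_{\Lambda/I}\Lambda/I:\MOD\Lambda/I\to\MOD\Lambda$, which is exactly the inflation (restriction of scalars along $\pi$) functor you describe. Your write-up just spells out the full faithfulness that the paper leaves implicit.
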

\begin{proof}
 Apply Corollary~\ref{coro::functors} to~$-\otimes_{\Lambda/I} \Lambda/I : \MOD \Lambda/I \longrightarrow \MOD \Lambda$.
\end{proof}

\begin{corollary}[Second reduction]\label{coro::secondReduction}
 If~$\Lambda$ is~$\tau$-tilting finite and~$e\in\Lambda$ is an idempotent, then~$e\Lambda e$ is~$\tau$-tilting finite.
\end{corollary}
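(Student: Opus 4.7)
The plan is to invoke Corollary~\ref{coro::functors}, exactly as was done for the first reduction: it suffices to produce a fully faithful functor $F : \MOD(e\Lambda e) \to \MOD \Lambda$, whereupon the hypothesis that $\Lambda$ is $\tau$-tilting finite will pass down to $e\Lambda e$.

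The natural candidate is the classical idempotent-reduction functor
$$F := - \otimes_{e\Lambda e} e\Lambda,$$
where $e\Lambda$ is viewed as an $(e\Lambda e,\Lambda)$-bimodule via left and right multiplication. (If one prefers left modules, the symmetric choice $\Lambda e \otimes_{e\Lambda e} -$ works identically.) This functor admits an obvious right adjoint $G : \MOD \Lambda \to \MOD(e\Lambda e)$ given by $G(N) = Ne$, so the first key step is to use the standard fact that $F$ is fully faithful if and only if the unit $\eta_M : M \to GF(M)$ of the adjunction is an isomorphism for every~$M$.

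The second key step is the computation
$$GF(M) \;=\; (M \otimes_{e\Lambda e} e\Lambda)\,e \;=\; M \otimes_{e\Lambda e} (e\Lambda \cdot e) \;=\; M \otimes_{e\Lambda e} e\Lambda e \;\cong\; M,$$
which identifies $\eta_M$ with the canonical isomorphism $m \mapsto m \otimes e$. Hence $F$ is fully faithful, and Corollary~\ref{coro::functors} finishes the proof.

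There is no real obstacle here: the argument is the ``bricks version'' of a well-known idempotent-truncation principle, and everything substantive has already been packaged into Corollary~\ref{coro::functors}. The only point deserving a moment of care is checking that $e\Lambda \cdot e = e\Lambda e$ inside $\Lambda$, which is immediate from $e^2 = e$.
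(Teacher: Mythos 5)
Your proof is correct and follows essentially the same route as the paper: both reduce to Corollary~\ref{coro::functors} by exhibiting a fully faithful functor $\MOD e\Lambda e \to \MOD \Lambda$. The only (immaterial) difference is that you use the left adjoint $-\otimes_{e\Lambda e} e\Lambda$ of the truncation $N \mapsto Ne$, while the paper uses the right adjoint $\Hom{e\Lambda e}(\Lambda e, ?)$; in either case full faithfulness follows from the same unit/counit computation $(M \otimes_{e\Lambda e} e\Lambda)e \cong M$.
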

\begin{proof}
 Apply Corollary~\ref{coro::functors} to~$\Hom{e\Lambda e}(\Lambda e, ?) : \MOD e\Lambda e \longrightarrow \MOD \Lambda$.
 Alternatively, apply \cite[Theorem 1.1]{PilaudPlamondonStella} for a proof which does not use the brick-$\tau$-tilting correspondence.
\end{proof}

\begin{remark}
 In practice, Corollary~\ref{coro::firstReduction} implies that erasing arrows or vertices from a~$\tau$-tilting finite algebra yields another~$\tau$-tilting finite algebra, 
 and Corollary~\ref{coro::secondReduction} implies that erasing an arrow and replacing the paths of length~$2$ that went through it by ``shortcut'' arrows also preserves~$\tau$-tilting finiteness.
\end{remark}

\section{Two classes of examples}\label{sect::twoExamples}
Our strategy to prove Theorem~\ref{theo::main} will be to reduce any gentle algebra to the two classes of examples presented in this section.

\begin{example}[Type~$\widetilde{A}_m$]\label{exam::AnTilde}
 Let~$Q$ be a quiver of type~$\widetilde{A}_m$, that is to say, an orientation of the following diagram with~$m+1$ vertices, where~$m\geq 1$.
 \begin{center}
  \begin{tikzcd}
   & \bullet\arrow[r, dash] & \bullet\arrow[r, dash] & \cdots & \bullet\ar[dr, dash]\arrow[l, dash] & \\
   \bullet\arrow[ur, dash]\arrow[dr, dash] & & & & & \bullet \\
   & \bullet\arrow[r, dash] & \bullet\arrow[r, dash] & \cdots & \bullet\ar[ur, dash]\arrow[l, dash] & \\
  \end{tikzcd}
 \end{center}
 The representation theory of the path algebra~$\Lambda = kQ$ is very well understood in this case, see for instance \cite[Section VIII.2]{AssemSimsonSkowronski}.
 In particular, this algebra is~$\tau$-tilting infinite.
\end{example}

\begin{example}\label{exam::other}
 The second class of examples that we will consider will be given by the path algebras of quivers~$Q$ defined by any orientation of the diagram below,
 \begin{center}
  \begin{tikzcd}
    \bullet\arrow[r, dash]  & \cdots\arrow[r, dash, "\alpha_{r-1}"] & \bullet\ar[d, "\alpha_r", ""{name=A}]    &&& \bullet\arrow[r, dash, "\gamma_2"] & \cdots\arrow[r, dash] & \bullet\ar[d, dash] \\
   \bullet\arrow[u, dash]\arrow[d, dash]  & &  \bullet\arrow[d, "\alpha_1", ""{name=B}]                     \arrow[r, dash, "\beta_1"] & \bullet\arrow[r, dash,"\beta_2"] & \cdots\arrow[r, dash, "\beta_s"] & \bullet\arrow[u, "\gamma_1", ""{name=D}] & &\bullet\ar[d, dash] \\
    \bullet\arrow[r, dash]  & \cdots & \bullet\arrow[l, dash, "\alpha_2"]                           &&& \bullet\arrow[u, "\gamma_t",""{name=C}] & \cdots\arrow[l, dash, "\gamma_{t-1}"]\arrow[r, dash] & \bullet  \\
    
    \arrow[dash, dotted, from=A, to=B, bend right=75]
    \arrow[dash, dotted, from=C, to=D, bend right=75]
  \end{tikzcd}
 \end{center}
 modulo the relations~$\alpha_r\alpha_1$ and~$\gamma_t\gamma_1$ 
 (note that the orientations of~$\alpha_1,\alpha_r, \gamma_1$ and~$\gamma_t$ are imposed, while those of other arrows can be arbitrary).
 We allow~$r=1$ (in which case~$\alpha_1 = \alpha_r$ is a loop whose square vanishes); we allow the same for~$t$.
 We also allow~$s=0$; in this case, we require that the cycle on the left and the cycle on the right are not both oriented cycles, otherwise the algebra would be infinite-dimensional.
\end{example}

\begin{proposition}\label{prop::algebraIsTauInfinite}
 The algebras defined in Example~\ref{exam::other} are~$\tau$-tilting infinite.
\end{proposition}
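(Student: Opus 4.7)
The plan is to exhibit, for any algebra~$\Lambda$ in Example~\ref{exam::other}, an infinite family of pairwise non-isomorphic bricks, so that $\tau$-tilting infiniteness follows from Theorem~\ref{theo::tau-tilting-bricks}. The bricks will be realised as rank-one band modules $M(b,\lambda,1)$, $\lambda\in k^*$, associated to a single explicit band~$b$ that makes one pass around the whole underlying graph of~$\Lambda$.

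First I would write down~$b$ explicitly: let $L$ be the traversal of the left cycle starting and ending at~$u_L$ (each $\alpha_i$ direct or inverse according to its orientation), let $R$ be the analogous traversal of the right cycle at~$u_R$, and let $B$ be the traversal of the connecting path from~$u_L$ to~$u_R$ (empty if $s=0$). Set $b := L\cdot B\cdot R\cdot B^{-1}$. To see that~$b$ is a valid band, note that the only imposed relations $\alpha_r\alpha_1$ and $\gamma_t\gamma_1$ are concentrated at~$u_L$ and~$u_R$, so all junctions internal to $L$, $R$, $B$, $B^{-1}$ are unproblematic. At each junction of~$b$ touching $u_L$ or $u_R$, the band leaves the relation vertex via the connecting path (or, when $s=0$, via the neighbouring cycle's arrow) rather than by reclosing the same cycle, so the forbidden compositions never appear. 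The gentle condition then guarantees that the alternative compositions ($\alpha_r\beta_1^{\pm}$, $\gamma_t\beta_s^{\pm}$, and their symmetric counterparts) are non-zero.

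Second, for each $\lambda\in k^*$, I would show $M(b,\lambda,1)$ is a brick. This module has dimension $2$ at each vertex visited twice by~$b$ (namely $u_L$, $u_R$ and the interior vertices of the connecting path) and dimension $1$ at each interior cycle vertex. Writing an endomorphism $f=(f_v)$ as a scalar where it acts on a one-dimensional space and as a $2\times 2$ matrix at the two-dimensional vertices, I would propagate the commutation relations $M_a f_{s(a)} = f_{t(a)} M_a$ around~$b$. The cycle arrows force all the scalars to coincide with one specific diagonal entry of $f_{u_L}$ (resp.\ $f_{u_R}$). The key point is that $\alpha_1$ and $\gamma_1$ each appear only once in~$b$, so $M_{\alpha_1}$ and $M_{\gamma_1}$ act non-trivially on only one of the two basis vectors at~$u_L$ and~$u_R$ respectively; the commutation relations then force both off-diagonal entries of $f_{u_L}$ and $f_{u_R}$ to vanish, and an analogous argument handles the $2\times 2$ matrices along the connecting path. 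Consequently $\End{\Lambda}(M(b,\lambda,1)) = k$, and distinct $\lambda$ give non-isomorphic bricks.

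The main obstacle is this brickness verification. The commutation-chasing computation is mechanical for any specific $r$, $t$, $s$ and any choice of orientations, but assembling a uniform argument requires careful bookkeeping across cases, or alternatively an appeal to the structural fact that rank-one band modules over gentle algebras are bricks---a consequence of the tube structure in the Auslander--Reiten quiver of a tame gentle algebra.
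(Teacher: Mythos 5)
Your band $b = L\cdot B\cdot R\cdot B^{-1}$ is, up to notation, the same band $(b')^{\varepsilon_1}\omega(b'')^{\varepsilon_1}\omega^{-1}$ that the paper uses, and for this particular band the brickness claim is in fact correct; nevertheless the proposal has two genuine gaps. The first is the choice of the infinite family: the set $\{M(b,\lambda,1)\}_{\lambda\in k^{*}}$ is infinite only when $k$ is infinite, whereas the paper's conventions allow an arbitrary base field. Over a finite field this family is finite, and you cannot repair it by passing to the higher-rank band modules $M(b,\lambda,n)$ with $n\geq 2$, since those carry a nonzero nilpotent endomorphism coming from the Jordan block and are never bricks. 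The paper sidesteps this entirely by taking the \emph{string} modules attached to the powers $b^{n}$, $n\geq 1$, which give infinitely many pairwise non-isomorphic bricks over any field.

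The second gap is the fallback you invoke to avoid the endomorphism computation: it is not true that rank-one band modules over gentle algebras are bricks. For a counterexample, take the gentle algebra with vertices $v,1,2,3,4$, arrows $x\colon v\to 1$, $y\colon v\to 2$, $q\colon 2\to 3$, $z\colon 3\to v$, $w\colon 4\to v$, $p\colon 4\to 1$, and relations $zy=wx=0$; the cyclic word $yqzw^{-1}px^{-1}$ is a band in which the vertex $v$ occurs once as a source (between $x^{-1}$ and $y$) and once as a sink (between $z$ and $w^{-1}$), so the simple $S_v$ is both a quotient and a submodule of $M(b,\lambda,1)$ and the composite $M(b,\lambda,1)\twoheadrightarrow S_v\hookrightarrow M(b,\lambda,1)$ is a nonzero nilpotent endomorphism. (Band modules over special biserial algebras do lie in homogeneous tubes, but $\tau$-periodicity does not imply brickness.) Consequently the verification you defer is the entire content of the proposition. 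The paper carries it out via Crawley--Boevey's description of morphisms between string modules, checking that no substring of $b$ occurs both ``on top'' and ``at the bottom''; the decisive points are exactly the ones your matrix bookkeeping would have to reproduce, namely that each interior vertex of the connecting path occurs either twice as a source or twice as a sink, that the two relation vertices are never simultaneously on top and at the bottom, and that the middle copy of the connecting string is neither on top nor at the bottom. Finally, note that the paper disposes of the case $s=0$ more simply, by an idempotent reduction to a path algebra of type $\widetilde{A}_m$.
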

\begin{proof}
 Let~$\Lambda$ be an algebra in the class defined in Example~\ref{exam::other}.
 
 We first deal with the case where~$s=0$.  In this case, let~$v$ be the vertex common to both cycles.
 Let~$e=1-e_v$.  Then the algebra~$e\Lambda e$ is of type~$\widetilde{A}_m$, so it is~$\tau$-tilting infinite.  By Corollary~\ref{coro::secondReduction},~$\Lambda$ is~$\tau$-tilting infinite.
 
 Assume, therefore, that~$s\geq 1$.  We will construct an infinite family of bricks for~$\Lambda$; 
 by the brick-$\tau$-tilting correspondence (see Theorem~\ref{theo::tau-tilting-bricks}), this will imply that~$\Lambda$ is~$\tau$-tilting infinite.
 
 Let~$b'=\alpha_1\alpha_2^{\delta_2}\cdots\alpha_{r-1}^{\delta_{r-1}}\alpha_r$ be the string corresponding to the cycle on the left,
 $b''=\gamma_1\gamma_2^{\zeta_2}\cdots\gamma_{t-1}^{\zeta_{t-1}}\gamma_t$ be the one corresponding to the cycle on the right,
 and $\omega=\beta_1^{\varepsilon_1}\cdots\beta_s^{\varepsilon_s}$ be the middle string followed from left to right,
 where the~$\delta_i, \varepsilon_i$ and~$\zeta_i$ are the appropriate signs.
 
 Define~$b = (b')^{\varepsilon_1}\omega(b'')^{\varepsilon_1}\omega^{-1}$. 
 We claim that the string module defined by~$b$ is a brick.
 To prove this, we need to prove that the only substring of~$b$ appearing both on top of and at the bottom of~$b$ is~$b$ itself, 
 so that the endomorphism ring of the string module is isomorphic to the base field~$k$ (using the descirption of all morphisms between string modules obtained in \cite{CrawleyBoevey}).
 Here, we say that a substring~$\sigma'$ of a string~$\sigma$ is \emph{on top of}~$\sigma$ if the arrows in~$\sigma$ adjacent to~$\sigma'$ are leaving~$\sigma'$,
 and that it is \emph{at the bottom of}~$\sigma$ if the arrows in~$\sigma$ adjacent to~$\sigma'$ are entering~$\sigma'$.
 
 Note first that the middle copy of~$\omega$ is neither on top of nor at the bottom of~$b$.
 Indeed, if~$\varepsilon_1 = 1$, then~$\omega$ is not at the bottom, since the first arrow of~$(b'')^{\varepsilon_1}$ is direct,
 and~$\omega$ is not on top, since the last arrow of~$(b')^{\varepsilon_1}$ is direct;
 if~$\varepsilon_1 = -1$, then~$\omega$ is not at the bottom, since the last arrow of~$(b')^{\varepsilon_1}$ is inverse,
 and~$\omega$ is not on top, since the last arrow of~$(b'')^{\varepsilon_1}$ is inverse.
 
 
 
 Next, let us deal with the substrings of length~$0$ of~$b$.  
 The starting point of~$b$ is on top if~$\varepsilon_1=1$ or at the bottom if~$\varepsilon_1=-1$.  It appears twice more in~$b$: at the end of~$b'$ and the end of~$b$.
 At the end of~$b'$ it is neither on top nor at the bottom, and at the end of~$b$ it is on top if~$\varepsilon_1=1$ or at the bottom if~$\varepsilon_1=-1$.
 Therefore this vertex does not occur both on top and at the bottom of~$b$.
 The other vertices appearing several times in~$b$ are the vertices of~$\omega$ and the starting/ending point of~$b''$.
 The former appear either twice at the top or twice at the bottom of~$b$.
 The latter cannot be both on top and at the bottom, since it appears in the middle of paths of length~$2$ of the following form:
 if~$\varepsilon_1 = 1$, then these paths are~$\beta_s^{\varepsilon_s}\gamma_1$ and~$\gamma_t \beta_s^{-\varepsilon_s}$, and 
 if~$\varepsilon_1 = -1$, then they are~$\beta_s^{\varepsilon_s}\gamma_t^{-1}$ and~$\gamma_1^{-1} \beta_s^{-\varepsilon_s}$. 
 In both cases the middle vertices is either on top of~$b$ or at the bottom of~$b$, but not both.
 Thus no substring of length~$0$ appears both at the top and the bottom of~$b$.
 
 Assume that there is a substring~$\rho$ of length at least~$1$, different from~$b$, which appears both on top and at the bottom of~$b$.
 Since the only arrows of~$b$ that are used twice are those of~$\omega$,~$\rho$ has to be a substring of~$\omega$ and of~$\omega^{-1}$.
 Since~$\omega$ does not go twice through the same vertex, the only substring both on top and at the bottom of~$\omega$ is~$\omega$ itself.  
 Hence~$\rho =\omega$.  But we saw above that the middle substring~$\omega$ is neither on top nor at the bottom of~$b$.  This is a contradiction.

 Thus the string module defined by~$b$ is a brick.
 
 Using the above arguments, one can also check that all powers of~$b$ define bricks as well.
 Thus~$\Lambda$ admits infinitely many pairwise non-isomorphic bricks, and by the brick-$\tau$-tilting correspondence (see Theorem~\ref{theo::tau-tilting-bricks}), it is~$\tau$-tilting infinite.
\end{proof}

\section{Proof of the main Theorem}
We now prove Theorem~\ref{theo::main}.  
Let~$\bar Q = (Q,I)$ be a gentle bound quiver.  
Assume that the algebra~$\Lambda = kQ/I$ is of infinite representation type.
Let us show that it is~$\tau$-tilting infinite.

By \cite{ButlerRingel}, there exists a band $b$ on~$\bar Q$. 
Our strategy will be to reduce to one of the two cases in the following lemma.

\begin{lemma}\label{lemm::reductionOnBands}
 The algebra~$\Lambda$ is~$\tau$-tilting infinite if~$\bar Q$ admits a band~$b$ satisfying one of the following conditions:
 \begin{enumerate}
  \item $b$ does not go through the same vertex twice (except for the starting and ending points of~$b$); or
  \item $b$ has the form~$b = b'\omega b''\omega^{-1}$, where
    \begin{itemize}
     \item ~$b'$ and~$b''$ are strings such that~$s(b') = t(b')$ and~$s(b'')=t(b'')$;
     \item ~$(b')^2$ and~$(b'')^2$ are not strings;
     \item ~$\omega$ is a possibly trivial string;
     \item none of~$b', b''$ and~$\omega$ go through the same vertex twice (except for the endpoints of~$b'$ and~$b''$);
     \item the only vertices that~$b', b''$ and~$\omega$ may have in common are their ending points.
    \end{itemize}
 \end{enumerate}
\end{lemma}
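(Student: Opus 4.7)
The plan is to use the two reductions of Corollaries~\ref{coro::firstReduction} and~\ref{coro::secondReduction} (in their contrapositive form) to cut $\Lambda$ down to a subquotient algebra that falls within one of the two classes of Section~\ref{sect::twoExamples}. In case (1) I aim to obtain a path algebra of type $\widetilde{A}_m$ (Example~\ref{exam::AnTilde}), while in case (2) I aim to obtain an algebra as in Example~\ref{exam::other}, so that $\tau$-tilting infiniteness propagates back to $\Lambda$.

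In case (1), let $V$ and $A$ be the sets of vertices and arrows appearing in $b$, let $J\subseteq\Lambda$ be the ideal generated by the arrows not in $A$, and let $e=\sum_{v\in V} e_v$. Because $b$ does not repeat any vertex, the arrows in $A$ form a cycle on $V$, so the quiver of $e(\Lambda/J)e$ is of type $\widetilde{A}_m$. No length-$2$ path inside this subquiver can belong to $I$: any such path corresponds, up to formal inversion, to a pair of consecutive direct letters of $b$ at a ``pass-through'' vertex, and the string condition rules out relations between consecutive letters. Moreover, the cycle cannot be coherently oriented, since otherwise it would yield an unbounded chain of non-zero paths in $\Lambda$. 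Hence $e(\Lambda/J)e\cong k\widetilde{A}_m$, which is $\tau$-tilting infinite by Example~\ref{exam::AnTilde}, and the two reductions then force $\Lambda$ to be $\tau$-tilting infinite.

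In case (2), I carry out the same reduction with $V$ and $A$ now the vertices and arrows appearing in $b'$, $b''$ and $\omega$. Because $b'$ and $b''$ do not revisit their internal vertices and $\omega$ does not revisit any vertex, the quiver of $e(\Lambda/J)e$ has exactly the shape of Example~\ref{exam::other}: two cycles joined by a (possibly trivial) path. The condition that $(b')^2$ is not a string forces the last and first letters of $b'$ to be two direct arrows whose composition lies in $I$, producing the relation $\alpha_r\alpha_1$ of Example~\ref{exam::other}; the same analysis applied to $b''$ produces $\gamma_t\gamma_1$. All other length-$2$ paths in the subquiver appear as pairs of consecutive letters in $b=b'\omega b''\omega^{-1}$ (or in its inverse) and therefore avoid $I$. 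In the degenerate case $s=0$, the finite-dimensionality of $\Lambda$ automatically prevents both cycles from being coherently oriented, matching the constraint of Example~\ref{exam::other}. Proposition~\ref{prop::algebraIsTauInfinite} then gives that $e(\Lambda/J)e$ is $\tau$-tilting infinite, and the two reductions again propagate this to $\Lambda$.

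The main obstacle will be the bookkeeping in case (2): when some letters of $b'$ or $b''$ are formal inverses of arrows, the ``junction relation'' at the endpoint of $b'$ does not take the literal form $\alpha_r\alpha_1$ of Example~\ref{exam::other} but rather an inverted counterpart, so one has to argue that after a possible cyclic shift or inversion of $b'$ and $b''$ the subquotient fits the template of Example~\ref{exam::other} exactly. A secondary delicate point is that the junction vertices of the subquiver carry three or four incident arrows, and one needs to invoke the gentle pairing of relations at a vertex to rule out that any length-$2$ path across $\omega$ and $b'$ (or $b''$) lies in $I$, so that no spurious extra relation appears in $e(\Lambda/J)e$ beyond those detected by the failure of $(b')^2$ and $(b'')^2$ to be strings.
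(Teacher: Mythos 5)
Your proposal is correct and follows essentially the same route as the paper: kill the arrows and vertices not visited by $b$ and identify the resulting subquotient with Example~\ref{exam::AnTilde} in case (1) and Example~\ref{exam::other} in case (2), then conclude via Corollaries~\ref{coro::firstReduction}--\ref{coro::secondReduction} and Proposition~\ref{prop::algebraIsTauInfinite}. In fact you supply more verification than the paper's own (very terse) proof, and the ``bookkeeping'' issues you flag do resolve as you expect, e.g.\ the hypotheses rule out $c_r=c_1^{-1}$, so the failure of $(b')^2$ to be a string really does produce a zero-relation at the junction.
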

\begin{proof}
 Let~$I$ be the ideal generated by all arrows and vertices through which~$b$ does not go.  
 If we are in case (1), then~$\Lambda/I$ is isomorphic to the path algebra of a quiver of type~$\widetilde{A}_m$.
 These algebras are~$\tau$-tilting infinite, so the result follows from Corollary~\ref{coro::firstReduction}.
 If we are in case (2), then~$\Lambda/I$ is isomorphic to an algebra in the class defined in Example~\ref{exam::other}.
 By Proposition~\ref{prop::algebraIsTauInfinite}, these are~$\tau$-tilting infinite, so the result follows again from Corollary~\ref{coro::firstReduction}.
%
%
%
%
\end{proof}

To prove Theorem~\ref{theo::main}, it is therefore sufficient to show that a band as in Lemma~\ref{lemm::reductionOnBands} always exists.

Let~$b$ be a band of minimal length on~$\bar Q$.
If~$b$ does not go through the same vertex twice (except at its endpoints), then by Lemma~\ref{lemm::reductionOnBands} (1), the theorem is proved.

Assume, therefore, that there is a vertex~$u$ through which~$b$ passes twice.
Up to cyclic reordering of~$b$, we can assume that this vertex is the starting point of~$b$.
Up to choosing another such vertex~$u$, we can also assume that~$b=b'b''$, with~$b'$ and~$b''$ non-trivial strings starting and ending at~$u$ and such that~$b'$ does not go through the same vertex twice (except at its endpoints).
Note that, by minimality of~$b$, the string $b'$ cannot be a band; that is,~$(b')^2$ cannot be a string.
The same is true for~$b''$.

\begin{lemma}\label{lemm::noCommonVertices}
 Let~$b=b'b''$ be as above.  Then~$b'$ and~$b''$ cannot have a vertex in common apart from their starting and ending points. 
\end{lemma}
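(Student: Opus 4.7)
The strategy is proof by contradiction. Assume $b'$ and $b''$ share a vertex $v \neq u$. Since $b'$ is simple, split $b' = p_1 p_2$ with $p_1 : u \to v$ and $p_2 : v \to u$; let $q_1 : u \to v$ be the shortest prefix of $b''$ reaching $v$ and $q_2 : v \to u$ the remainder, so $b'' = q_1 q_2$. Denote by $x_1, y_1, x_2, y_2$ the last letter of $p_1$, first letter of $p_2$, last letter of $q_1$, first letter of $q_2$ respectively, and by $\mu, \nu, \alpha, \sigma$ the first letter of $p_1$, last letter of $p_2$, first letter of $q_1$, last letter of $q_2$. The aim is to construct a cyclic string from $u$ to $u$ of length strictly less than $|b|$, which yields a band shorter than $b$ and so contradicts the minimality of $b$.

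I will consider four candidate closed walks from $u$ to $u$: $w_1 = p_1 q_2$, $w_2 = q_1 p_2$, $w_3 = p_1 q_1^{-1}$, and $w_4 = p_2^{-1} q_2$. Each is strictly shorter than $b$ because $p_2, q_1, q_2$ are all nontrivial ($v \neq u$). All internal transitions are inherited from $b$, so only the junction at $v$ and the cyclic closure at $u$ require checking. For $w_1$ and $w_2$ the $u$-closure transitions are $(\sigma, \mu)$ and $(\nu, \alpha)$, which already appear in $b$; the only $v$-junctions to verify are $(x_1, y_2)$ and $(x_2, y_1)$. For $w_3$ and $w_4$ the $v$-junctions are $(x_1, x_2^{-1})$ and $(y_1^{-1}, y_2)$, and the $u$-closures are $(\alpha^{-1}, \mu)$ and $(\sigma, \nu^{-1})$.

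The key technical input is the following combinatorial property of gentle algebras, which I will prove first: at any vertex, if two string transitions $(x_1, y_1)$ and $(x_2, y_2)$ are valid but the swapped transitions $(x_1, y_2)$ and $(x_2, y_1)$ are not, then $(x_1, x_2^{-1})$, $(x_2, x_1^{-1})$, $(y_1^{-1}, y_2)$, and $(y_2^{-1}, y_1)$ are all valid. This follows from the observation that at any gentle vertex the valid string transitions form a disjoint union $X_1 \times Y_1 \sqcup X_2 \times Y_2$ of two complete bipartite components, and inversion swaps $X_1 \leftrightarrow Y_2$ and $X_2 \leftrightarrow Y_1$; the invalidity of the swapped pairs forces $x_1$ and $x_2$ to lie in different components, and the assertion then follows immediately from the inversion symmetry. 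The structure is verified by a direct case analysis over the finitely many admissible local configurations at a gentle vertex.

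The proof now concludes by cases. If either $(x_1, y_2)$ or $(x_2, y_1)$ is valid at $v$, the corresponding $w_1$ or $w_2$ is a cyclic string strictly shorter than $b$, giving the contradiction. Otherwise, the combinatorial property applied at $v$ provides valid $v$-junctions for both $w_3$ and $w_4$. For the $u$-closures, note that $(\nu, \alpha)$ and $(\sigma, \mu)$ are valid in $b$, while $(\nu, \mu)$ and $(\sigma, \alpha)$ are invalid, precisely because $(b')^2$ and $(b'')^2$ are not strings (these invalidities are exactly the failures of those two squares). Applying the combinatorial property at $u$ then gives the validity of $(\alpha^{-1}, \mu)$ and $(\sigma, \nu^{-1})$, making $w_3$ and $w_4$ into cyclic strings strictly shorter than $b$, the sought contradiction. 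The main obstacle is establishing the combinatorial swap property at a gentle vertex: it demands a careful enumeration of the local arrow/relation configurations permitted by the gentle conditions and a verification, in each case, of the two-component bipartite structure of valid transitions and its symmetry under inversion.
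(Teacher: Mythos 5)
Your proof is correct and follows essentially the same strategy as the paper: contradict the minimality of the band $b$ by recombining the halves of $b'$ and $b''$ at the common vertex $v$ into a strictly shorter closed string, using the local structure of a gentle vertex to validate the new junctions --- indeed your $w_2=q_1p_2$ and $w_3=p_1q_1^{-1}$ are, up to cyclic rotation and inversion, exactly the two candidates $\alpha_j^{\delta_j}\cdots\alpha_r^{\delta_r}\beta_1^{\varepsilon_1}\cdots\beta_k^{\varepsilon_k}$ and $\alpha_{j-1}^{-\delta_{j-1}}\cdots\alpha_1^{-\delta_1}\beta_1^{\varepsilon_1}\cdots\beta_k^{\varepsilon_k}$ that the paper uses. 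Your version is in fact more thorough: the ``swap lemma'' on the bipartite structure of valid transitions at a gentle vertex lets you check the junction at $u$ between $\alpha_1^{-\delta_1}$ and $\beta_1^{\varepsilon_1}$ (which is where the hypotheses that $(b')^2$ and $(b'')^2$ are not strings actually enter), a verification the paper's proof leaves implicit.
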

\begin{proof}
 Assume that~$b'$ and~$b''$ have another vertex in common, and let~$v$ be such a vertex.
Write~$b'=\alpha_1^{\delta_1} \cdots \alpha_r^{\delta_r}$ and~$b''=\beta_1^{\varepsilon_1} \cdots \beta_s^{\varepsilon_s}$, 
and assume that~$s(\alpha_j^{\delta_j}) = v = t(\beta_k^{\varepsilon_k})$, with~$i\neq 1,r$ and~$j\neq 1,s$.
By minimality of~$b$, $\alpha_j^{\delta_j} \alpha_{j+1}^{\delta_{j+1}} \cdots \alpha_r^{\delta_r}\beta_1^{\varepsilon_1} \cdots \beta_k^{\varepsilon_k}$ cannot be a band.
This implies that~$\delta_j = \varepsilon_k$ and that the composition of~$\alpha_j^{\delta_j}$ and~$\beta_k^{\varepsilon_k}$ is a relation in~$\bar Q$.
But then~$\alpha_{j-1}^{-\delta_{j-1}}\alpha_{j-2}^{-\delta_{j-2}}\cdots \alpha_{1}^{-\delta_{1}} \beta_1^{\varepsilon_1} \cdots \beta_k^{\varepsilon_k}$ is a band, 
since~$\beta_k^{\varepsilon_k}$ cannot be in a relation both with~$\alpha_j^{\delta_j}$ and with~$\alpha_{j-1}^{\delta_{j-1}}$.
This again contradicts the minimality of~$b$.
\end{proof}

Now, if~$b''$ does not go twice through the same vertex (except for its endpoints), then we are in case (2) of Lemma~\ref{lemm::reductionOnBands} with~$\omega$ trivial, and the theorem is proved.
Assume thus that~$b''$ does go twice through a vertex~$v$ outside its endpoints (or three times through~$v$ if it is the endpoint of~$b''$).
Up to choosing another vertex~$v$, we can assume that there is a substring~$b''' = \beta_j^{\varepsilon_j}\beta_{j+1}^{\varepsilon_{j+1}} \cdots \beta_{k}^{\varepsilon_k}$ of~$b''$
which starts and ends in~$v$, does not go through the same vertex twice outside its endpoints, and such that~$\omega:=\beta_1^{\varepsilon_1} \cdots \beta_{j-1}^{\varepsilon_{j-1}}$ does not go through the same vertex twice.

If~$\beta_{k}$ and~$\beta_{j-1}$ form a relation of~$\bar Q$, then~$\beta_{j}$ and~$\beta_{k}$ cannot form a relation, since there is at most one involving~$\beta_{k}$ and~$\beta_{j-1}, \beta_j$ are distinct.
Thus~$b'''$ is a band with no repeated vertices (except for its endpoints), and we have reduced to case (1) of Lemma~\ref{lemm::reductionOnBands}, proving the theorem.

If, on the other hand, $\beta_{k}$ and~$\beta_{j-1}$ do not form a relation, then~$b'\omega b'''\omega^{-1}$ is a band satisfying the conditions of case (2) of Lemma~\ref{lemm::reductionOnBands}.

This finishes the proof of Theorem~\ref{theo::main}.


\newcommand{\etalchar}[1]{$^{#1}$}

\end{document}